\newtheorem{thm}{Theorem}[section]
\newtheorem{prop}[thm]{Proposition}
\newtheorem{cor}[thm]{Corollary}
\numberwithin{equation}{section}
\renewcommand{\thefootnote}{}
\begin{document}

\begin{center}
{\large\bf A further $q$-analogue of Van Hamme's (H.2)
supercongruence for $p\equiv1\pmod{4}$
 \footnote{This work is supported by the National Natural Science
Foundation of China (No. 11661032).}}
\end{center}

\renewcommand{\thefootnote}{$\dagger$}

\vskip 2mm \centerline{Chuanan Wei}
\begin{center}
{School of Biomedical Information and Engineering,\\ Hainan Medical University, Haikou 571199, China\\
{\tt weichuanan78@163.com  } }
\end{center}


\vskip 0.7cm \noindent{\bf Abstract.} Several years ago, Long and
Ramakrishna [Adv. Math. 290 (2016), 773--808] extended Van Hamme's
(H.2) supercongruence to the modulus $p^3$ case. Recently, Guo [Int.
J. Number Theory, to appear] found a $q$-analogue of the
Long--Ramakrishna formula for $p\equiv 3\pmod 4$. In this note, a
$q$-analogue of the Long--Ramakrishna formula for $p\equiv 1\pmod 4$
is derived through the $q$-Whipple formulas and the Chinese
remainder theorem for coprime polynomials.

\vskip 3mm \noindent {\it Keywords}: basic hypergeometric series;
$q$-Whipple formula; $q$-supercongruence

 \vskip 0.2cm \noindent{\it AMS
Subject Classifications:} 33D15; 11A07; 11B65

\section{Introduction}
For any complex variable $x$, define the shifted-factorial to be
\[(x)_{0}=1\quad \text{and}\quad (x)_{n}
=x(x+1)\cdots(x+n-1)\quad \text{when}\quad n\in\mathbb{N}.\]
 In 1997, Van Hamme
\cite[(H.2)]{Hamme} conjectured that
\begin{equation}\label{eq:hamme}
\sum_{k=0}^{(p-1)/2}\frac{(1/2)_k^3}{k!^3}\equiv
\begin{cases} \displaystyle -\Gamma_p(1/4)^4  \pmod{p^2}, &\text{if $p\equiv 1\pmod 4$,}\\[10pt]
 0\pmod{p^2}, &\text{if $p\equiv 3\pmod 4$.}
\end{cases}
\end{equation}
Here and throughout the paper, $p$ always denotes an odd prime and
$\Gamma_p(x)$ is the $p$-adic Gamma function. In 2016, Long and
Ramakrishna \cite[Theorem 3]{LR} gave the following extension of
\eqref{eq:hamme}:
\begin{equation}\label{eq:long}
\sum_{k=0}^{(p-1)/2}\frac{(1/2)_k^3}{k!^3}\equiv
\begin{cases} \displaystyle -\Gamma_p(1/4)^4  \pmod{p^3}, &\text{if $p\equiv 1\pmod 4$,}\\[10pt]
 \displaystyle -\frac{p^2}{16}\Gamma_p(1/4)^4\pmod{p^3}, &\text{if $p\equiv 3\pmod 4$.}
\end{cases}
\end{equation}

 For any complex numbers $x$ and $q$, define the $q$-shifted factorial
 as
 \begin{equation*}
(x;q)_{0}=1\quad\text{and}\quad
(x;q)_n=(1-x)(1-xq)\cdots(1-xq^{n-1})\quad \text{when}\quad
n\in\mathbb{N}.
 \end{equation*}
For simplicity, we also adopt the compact notation
\begin{equation*}
(x_1,x_2,\dots,x_m;q)_{n}=(x_1;q)_{n}(x_2;q)_{n}\cdots(x_m;q)_{n}.
 \end{equation*}
Following Gasper and Rahman \cite{Gasper}, define the basic
hypergeometric series $_{r+1}\phi_{r}$ by
$$
_{r+1}\phi_{r}\left[\begin{array}{c}
a_1,a_2,\ldots,a_{r+1}\\
b_1,b_2,\ldots,b_{r}
\end{array};q,\, z
\right] =\sum_{k=0}^{\infty}\frac{(a_1,a_2,\ldots, a_{r+1};q)_k}
{(q,b_1,b_2,\ldots,b_{r};q)_k}z^k.
$$
Then the $q$-Whipple formula due to Andrews \cite{Andrews} and the
$q$-Whipple formula due to Jain \cite{Jain} can be stated as
\begin{align}
& _{4}\phi_{3}\!\left[\begin{array}{cccccccc}
 q^{-n},  q^{1+n}, b,  -b \\
 -q, c,  b^2q/c
\end{array};q,\, q \right]
=q^{\binom{n+1}{2}}\frac{(b^2q^{1-n}/c, cq^{-n};q^2)_{n}}
 {(b^2q/c, c;q)_{n}}, \label{eq:q-whipple-a}
 \\[5pt]
& _{4}\phi_{3}\!\left[\begin{array}{cccccccc}
 a,  q/a, q^{-n},  -q^{-n} \\
 c, q^{1-2n}/c,  -q
 \end{array};q,\, q \right]
=\frac{(ac, cq/a;q^2)_{n}}
 {(c;q)_{2n}}. \label{eq:q-whipple-b}
\end{align}

Recently, Guo and Zudilin \cite[Theorem 2]{GuoZu2} displayed a
$q$-analogue of \eqref{eq:hamme}: for any positive odd integer $n$,
\begin{align}
&\sum_{k=0}^{(n-1)/2}\frac{(q;q^2)_k^2(q^2;q^4)_k}{(q^2;q^2)_k^2(q^4;q^4)_k}q^{2k}
\notag\\[5pt]
&\equiv
\begin{cases} \displaystyle\frac{(q^2;q^4)_{(n-1)/4}^2}{(q^4;q^4)_{(n-1)/4}^2}\pmod{\Phi_n(q)^2}, &\text{if $n\equiv 1\pmod 4$,}\\[10pt]
 \displaystyle 0\pmod{\Phi_n(q)^2}, &\text{if $n\equiv 3\pmod 4$.}
\end{cases}
\label{eq:guo-a}
\end{align}
 Here and throughout the
paper, $\Phi_n(q)$ stands for  the $n$-th cyclotomic polynomial in
$q$:
\begin{equation*}
\Phi_n(q)=\prod_{\substack{1\leqslant k\leqslant n\\
\gcd(k,n)=1}}(q-\zeta^k),
\end{equation*}
where $\zeta$ is an $n$-th primitive root of unity. Further, Guo
\cite[Theorem 1]{Guo-new} provided the following partial
$q$-analogue of \eqref{eq:long}: for any positive integer
$n\equiv3\pmod{4}$,
\begin{align}
\sum_{k=0}^{(n-1)/2}\frac{(q,;q^2)_k^2(q^2;q^4)_k}{(q^2;q^2)_k^2(q^4;q^4)_k}q^{2k}
\equiv[n]\frac{(q^3;q^4)_{(n-1)/2}}{(q^5;q^4)_{(n-1)/2}}\pmod{\Phi_n(q)^3}.
\label{eq:guo-b}
\end{align}
For more $q$-analogues of supercongruences, we refer the reader to
\cite{Guo-rima,Guo-jmaa,Guo-rama,Guo-a2,GS1,GuoZu,LP,NP,Tauraso,WY-a,Zu19}.

Motivated by the work just mentioned, we
shall establish the following result.

\begin{thm}\label{thm-a}
Let $n\equiv 1\pmod 4$ be a positive integer. Then, modulo
$\Phi_n(q)^3$,
\begin{align*}
\sum_{k=0}^{(n-1)/2}\frac{(q;q^2)_k^2(q^2;q^4)_k}{(q^2;q^2)_k^2(q^4;q^4)_k}q^{2k}
\equiv
q^{(n-1)/2}\frac{(q^2;q^4)_{(n-1)/4}^2}{(q^4;q^4)_{(n-1)/4}^2}\bigg\{1+2[n]^2\sum_{i=1}^{(n-1)/4}\frac{q^{4i-2}}{[4i-2]^2}\bigg\}.
\end{align*}
\end{thm}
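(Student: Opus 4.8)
The plan is to argue by creative microscoping. Introduce a free parameter $a$ and set
\[
f_n(a)=\sum_{k=0}^{(n-1)/2}\frac{(aq;q^2)_k\,(q/a;q^2)_k\,(q^2;q^4)_k}{(q^2;q^2)_k^2\,(q^4;q^4)_k}\,q^{2k},
\]
so that $f_n(1)$ is the left-hand side of Theorem~\ref{thm-a} and $f_n(a)=f_n(1/a)$. Since $aq=q\cdot a$ and $q/a=q\cdot a^{-1}$, the specialization $a=q^{-n}$ turns the $a$-dependent factors into $(q^{1-n};q^2)_k(q^{1+n};q^2)_k$, and $(q^{1-n};q^2)_k$ vanishes for $k\ge(n+1)/2$; thus $f_n(q^{-n})$ (and, by symmetry, $f_n(q^{n})$) is a genuinely terminating series whose summation range is exactly $0\le k\le(n-1)/2$.

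First I would evaluate this terminating series in closed form. Rewriting it in base $q^2$ and folding the even factors through the elementary identity $(x;q^2)_k(-x;q^2)_k=(x^2;q^4)_k$, the sum matches the left-hand side of the Andrews formula \eqref{eq:q-whipple-a} (and, after a companion rearrangement, of the Jain formula \eqref{eq:q-whipple-b}) for suitable choices of the parameters $b,c$; applying the two $q$-Whipple formulas expresses $f_n(q^{\mp n})$ as explicit products. Writing $C(a)$ for a parametric version of the right-hand side of Theorem~\ref{thm-a} that reproduces these products at $a=q^{\pm n}$, the divisibility of $f_n(a)-C(a)$ by the two coprime factors $1-aq^n$ and $a-q^n$ gives, via the Chinese remainder theorem for coprime polynomials,
\[
f_n(a)\equiv C(a)\pmod{(1-aq^n)(a-q^n)}.
\]

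Next I would prove the complementary congruence $f_n(a)\equiv C(a)\pmod{\Phi_n(q)}$; here $[n]\equiv0\pmod{\Phi_n(q)}$ annihilates the correction term, so this reduces to matching the main product $q^{(n-1)/2}(q^2;q^4)_{(n-1)/4}^2/(q^4;q^4)_{(n-1)/4}^2$ against the mod-$\Phi_n(q)$ evaluation behind \eqref{eq:guo-a}. As $\Phi_n(q)$ is coprime to $(1-aq^n)(a-q^n)$, a second application of the Chinese remainder theorem yields
\[
f_n(a)\equiv C(a)\pmod{\Phi_n(q)(1-aq^n)(a-q^n)}.
\]
Setting $a=1$ makes $(1-aq^n)(a-q^n)$ equal to $(1-q^n)^2$, which is divisible by $\Phi_n(q)^2$, so the modulus acquires the factor $\Phi_n(q)^3$; simplifying $C(1)$ then gives the stated congruence.

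The main obstacle will be pinning down the parametric right-hand side $C(a)$ and, in particular, producing the harmonic-type correction $2[n]^2\sum_{i=1}^{(n-1)/4}q^{4i-2}/[4i-2]^2$. Because $f_n(a)=f_n(1/a)$, the expansion of $C(a)$ about $a=1$ has no linear term, so the first genuine deviation from the main term is of second order; this is exactly what forces the squared $q$-integers $[4i-2]^2$ in the denominators. Carrying out this second-order ($q$-logarithmic-derivative) expansion of the two Whipple products, and verifying that it matches the claimed sum modulo $\Phi_n(q)^3$ rather than merely modulo $\Phi_n(q)^2$, is the delicate computational core of the proof.
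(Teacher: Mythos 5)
Your first step is sound: at $a=q^{\pm n}$ the series terminates, and Andrews' formula \eqref{eq:q-whipple-a} (with $q\mapsto q^2$, $n\mapsto (n-1)/2$, $b\mapsto q$, $c\mapsto q^2$) gives exactly $f_n(q^{\pm n})=q^{(n-1)/2}(q^2;q^4)_{(n-1)/4}^2/(q^4;q^4)_{(n-1)/4}^2=:V_1(q)$, whence $f_n(a)\equiv V_1\pmod{(1-aq^n)(a-q^n)}$. (Jain's formula \eqref{eq:q-whipple-b} plays no role here: in your $f_n(a)$ the remaining numerator parameters are $q,-q$, which are never of the terminating form $q^{1-n},-q^{1-n}$ that \eqref{eq:q-whipple-b} requires, and by the symmetry $a\mapsto 1/a$ the two specializations give the same product anyway.) The genuine gap is your second step. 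Since the correction term in your $C(a)$ carries $[n]^2$, your claim amounts to proving $f_n(a)\equiv V_1\pmod{\Phi_n(q)}$ for \emph{generic} $a$, with an $a$-\emph{independent} right-hand side. That statement is false, and it cannot be repaired: if it held, then $f_n(a)-V_1$ would be divisible by the coprime product $\Phi_n(q)(1-aq^n)(a-q^n)$, and setting $a=1$ would yield $f_n(1)\equiv V_1\pmod{\Phi_n(q)^3}$, i.e.\ the theorem with no correction term at all; comparing with the theorem itself (and using that $V_1$ and $[n]^2/\Phi_n(q)^2$ are coprime to $\Phi_n(q)$) would force $\sum_{i=1}^{(n-1)/4}q^{4i-2}/[4i-2]^2\equiv 0\pmod{\Phi_n(q)}$, which fails already for $n=5$, where this sum is $q^2/(1+q)^2$. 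One can also see the failure directly: the coefficient of $(a+a^{-1})^2$ in $f_5(a)$ equals $\tfrac{q^8(1-q^6)}{(1-q^2)(1-q^4)^3(1-q^8)}$, which does not vanish at a primitive fifth root of unity, so at $q=\zeta$ the quantity $f_n(a)$ genuinely depends on $a$.

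What is true modulo $\Phi_n(q)$ is an $a$-dependent evaluation, namely $f_n(a)\equiv q^{(n-1)/2}(aq^2,q^2/a;q^4)_{(n-1)/4}^2/(q^2,q^4;q^4)_{(n-1)/4}^2$, and the harmonic-type term in the theorem arises precisely from the second-order expansion of \emph{this} representative at $a=1$, pushed through the Chinese-remainder weight $(1-aq^n)(a-q^n)/(1-a)^2$ by L'Hospital's rule (this is the paper's step \eqref{eq:wei-dd}). So your instinct that a second-order $q$-logarithmic-derivative expansion creates the denominators $[4i-2]^2$ is correct, but it must be applied to this $a$-dependent product, not to the $a$-independent Whipple values $f_n(q^{\pm n})$, which contain no such information. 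The structural obstruction is that your one-parameter sum offers no tool to prove the $a$-dependent congruence: as noted above, Jain's formula cannot be applied to $f_n(a)$ for generic $a$. This is exactly why the paper carries a second parameter $b$, writing $(q/b,-q/b;q^2)_k$ upstairs and $(-q^2,q^2/b^2;q^2)_k$ downstairs: at $b=q^n$ (with $a$ generic) Jain's formula \eqref{eq:q-whipple-b} applies and yields the congruence \eqref{eq:wei-cc} modulo $b-q^n$; this is combined with the Andrews evaluation \eqref{eq:wei-bb} by a two-parameter Chinese remainder argument, and only after letting $b\to 1$ (which converts $b-q^n$ into a factor divisible by $\Phi_n(q)$ and produces the $a$-dependent representative above) does the limit $a\to1$ deliver the modulus $\Phi_n(q)^3$ together with the correction term. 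Without reinstating such a second parameter, or supplying an independent proof of the $a$-dependent congruence modulo $\Phi_n(q)$, your plan cannot produce the stated result.
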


Obviously, Theorem \ref{thm-a} is an extension of \eqref{eq:guo-a}
for $n\equiv1\pmod{4}$. Letting $n=p$ be a prime and taking $q\to 1$
in this theorem, we obtain the conclusion:
\begin{align}\label{eq:wei}
\sum_{k=0}^{(p-1)/2}\frac{(1/2)_k^3}{k!^3} \equiv
\frac{(1/2)_{(p-1)/4}^2}{\big((p-1)/4\big)!^2}\bigg\{1+\frac{p^{2}}{2}H_{(p-1)/2}^{(2)}-\frac{p^{2}}{8}H_{(p-1)/4}^{(2)}\bigg\}\pmod{p^3},
\end{align}
where the harmonic numbers of $2$-order are given by
\[H_{m}^{(2)}
  =\sum_{k=1}^m\frac{1}{k^{2}}.\]
Using the known formula (cf. \cite[Page 7]{Sun}):
\begin{align*}
&H_{(p-1)/2}^{(2)}\equiv0\pmod{p}\quad\text{with}\quad p>3,
\end{align*}
we deduce the following supercongruence from \eqref{eq:wei}.

\begin{cor}\label{cor-a}
Let $p\equiv 1\pmod 4$ be a prime. Then
\begin{align}\label{eq:wei-a}
\sum_{k=0}^{(p-1)/2}\frac{(1/2)_k^3}{k!^3} \equiv
\frac{(1/2)_{(p-1)/4}^2}{\big((p-1)/4\big)!^2}\bigg\{1-\frac{p^{2}}{8}H_{(p-1)/4}^{(2)}\bigg\}\pmod{p^3}.
\end{align}
\end{cor}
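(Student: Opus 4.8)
The plan is to obtain Corollary \ref{cor-a} as the simplification of the already-recorded congruence \eqref{eq:wei}, which itself is the $n=p$, $q\to1$ specialization of Theorem \ref{thm-a}. Granting \eqref{eq:wei}, the only substantive task is to show that the middle summand $\frac{p^2}{2}H_{(p-1)/2}^{(2)}$ inside the brace may be discarded modulo $p^3$; the corollary then drops out immediately. To make the proposal self-contained I would first re-derive \eqref{eq:wei}, since that is where the genuine work lies.

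For the derivation of \eqref{eq:wei} I set $n=p$ in Theorem \ref{thm-a} and let $q\to1$. Term by term, $(q;q^2)_k/(q^2;q^2)_k\to(1/2)_k/k!$ and $(q^2;q^4)_k/(q^4;q^4)_k\to(1/2)_k/k!$, so the left-hand sum tends to $\sum_{k=0}^{(p-1)/2}(1/2)_k^3/k!^3$; likewise the prefactor $q^{(n-1)/2}(q^2;q^4)_{(n-1)/4}^2/(q^4;q^4)_{(n-1)/4}^2$ tends to $(1/2)_{(p-1)/4}^2/((p-1)/4)!^2$, while $[n]^2\to p^2$ and $q^{4i-2}/[4i-2]^2\to1/(4i-2)^2$. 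Hence the bracketed sum $2[n]^2\sum q^{4i-2}/[4i-2]^2$ tends to $2p^2\sum_{i=1}^{(p-1)/4}(4i-2)^{-2}=\frac{p^2}{2}\sum_{i=1}^{(p-1)/4}(2i-1)^{-2}$, and I would convert this into harmonic numbers via the even/odd splitting
\[
\sum_{i=1}^{(p-1)/4}\frac{1}{(2i-1)^2}=H_{(p-1)/2}^{(2)}-\frac14H_{(p-1)/4}^{(2)},
\]
which holds because the even-indexed part of $H_{(p-1)/2}^{(2)}$ equals $\tfrac14H_{(p-1)/4}^{(2)}$ (note $(p-1)/2$ is even for $p\equiv1\pmod4$). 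Substituting reproduces the brace $1+\frac{p^2}{2}H_{(p-1)/2}^{(2)}-\frac{p^2}{8}H_{(p-1)/4}^{(2)}$ of \eqref{eq:wei}.

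With \eqref{eq:wei} in hand the finish is short. Since $p\equiv1\pmod4$ forces $p\ge5$, the cited congruence $H_{(p-1)/2}^{(2)}\equiv0\pmod p$ applies, so $\frac{p^2}{2}H_{(p-1)/2}^{(2)}\equiv0\pmod{p^3}$. Because the prefactor $(1/2)_{(p-1)/4}^2/((p-1)/4)!^2=4^{-(p-1)/2}\binom{(p-1)/2}{(p-1)/4}^2$ is a $p$-adic integer, multiplying through preserves divisibility by $p^3$; thus that term vanishes from the brace and \eqref{eq:wei} collapses exactly to \eqref{eq:wei-a}.

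I expect the main obstacle to be the passage from the $\Phi_n(q)^3$-congruence of Theorem \ref{thm-a} to a genuine congruence modulo $p^3$ at $q=1$. One must write the difference of the two sides as $\Phi_p(q)^3$ times a rational cofactor and verify that this cofactor is regular at $q=1$ (the offending $q$-shifted factorials cancel, and the denominators $[4i-2]$ with $i\le(p-1)/4$ are coprime to $p$ at $q=1$), so that $\Phi_p(1)^3=p^3$ is genuinely extracted and the cofactor is $p$-adically integral. The remaining ingredients—the $q\to1$ limits, the harmonic bookkeeping, and the elimination of the middle term—are then routine.
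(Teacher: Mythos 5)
Your proposal is correct and follows essentially the same route as the paper: the paper likewise obtains \eqref{eq:wei} by setting $n=p$ and letting $q\to1$ in Theorem~\ref{thm-a} (the same term-by-term limits and odd/even harmonic-number splitting you carry out), and then deletes the term $\frac{p^{2}}{2}H_{(p-1)/2}^{(2)}$ using the cited congruence $H_{(p-1)/2}^{(2)}\equiv0\pmod{p}$ for $p>3$. Your additional remarks on extracting $\Phi_p(1)^3=p^3$ from the cyclotomic congruence and on the $p$-adic integrality of the prefactor only make explicit steps the paper leaves implicit.
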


For the sake of explaining the equivalence of \eqref{eq:long} for
$p\equiv 1\pmod 4$ and \eqref{eq:wei-a}, we need to verify the
following relation.

\begin{prop}\label{prop-a}
Let $p\equiv 1\pmod 4$ be an odd prime. Then
\begin{align*}
\frac{(1/2)_{(p-1)/4}^2}{\big((p-1)/4\big)!^2}\bigg\{1-\frac{p^{2}}{8}H_{(p-1)/4}^{(2)}\bigg\}\equiv
-\Gamma_p(1/4)^4\pmod{p^3}.
\end{align*}
\end{prop}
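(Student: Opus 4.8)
The plan is to convert the rational product into $p$-adic Gamma values, to push the arguments down to $1/4$ and $3/4$ by a second-order $p$-adic Taylor expansion, and finally to identify the surviving $p^2$-term with the harmonic sum $H_{(p-1)/4}^{(2)}$ by evaluating a second logarithmic derivative of $\Gamma_p$ modulo $p$. Throughout write $m=(p-1)/4$ and $G_k=(\log\Gamma_p)^{(k)}$. First I would use the interpolation $(a)_n=(-1)^n\Gamma_p(a+n)/\Gamma_p(a)$ (valid since $1/2+j$ and $1+j$ are $p$-adic units for $0\le j<m$), together with $\Gamma_p(1)=-1$ and the reflection value $\Gamma_p(1/2)^2=(-1)^{(p+1)/2}=-1$, which holds precisely because $p\equiv1\pmod4$. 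This gives
\[
\frac{(1/2)_m^2}{(m!)^2}=-\frac{\Gamma_p((p+1)/4)^2}{\Gamma_p((p+3)/4)^2}.
\]

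Next, since $(p+1)/4=1/4+p/4$ and $(p+3)/4=3/4+p/4$ with $p/4\in p\mathbb{Z}_p$, I would Taylor-expand $\log\Gamma_p$ about $1/4$ and $3/4$ to second order. Logarithmic differentiation of the reflection formula $\Gamma_p(x)\Gamma_p(1-x)=(-1)^{R(x)}$ (with $R(x)\in\{1,\dots,p\}$, $R(x)\equiv x\pmod p$, a locally constant right-hand side) yields $G_1(1/4)=G_1(3/4)$ and $G_2(3/4)=-G_2(1/4)$, so the first-order term cancels and the second-order term doubles; the same reflection turns $\Gamma_p(1/4)^2/\Gamma_p(3/4)^2$ into $\Gamma_p(1/4)^4$, the sign squaring away. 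The outcome is
\[
\frac{(1/2)_m^2}{(m!)^2}\equiv-\Gamma_p(1/4)^4\Big(1+\tfrac{p^2}{8}\,G_2(1/4)\Big)\pmod{p^3}.
\]
Multiplying by the bracket $\{1-\tfrac{p^2}{8}H_m^{(2)}\}$ and cancelling the $p$-adic unit $-\Gamma_p(1/4)^4$, the proposition collapses to the single congruence $G_2(1/4)\equiv H_{(p-1)/4}^{(2)}\pmod p$.

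To prove that congruence I would telescope the functional-equation identity $G_2(x+1)=G_2(x)-x^{-2}$, valid for $p$-adic units $x$, from $x=1/4$ up to $1/4+m=p/4$, obtaining
\[
G_2(1/4)=G_2(p/4)+\sum_{j=0}^{m-1}\frac{1}{(1/4+j)^2}.
\]
Because $1/4\equiv-m\pmod p$, one has $1/4+j\equiv j-m\pmod p$, so the finite sum is $\equiv\sum_{i=1}^{m}i^{-2}=H_m^{(2)}\pmod p$, while local constancy of $G_2$ gives $G_2(p/4)\equiv G_2(0)\pmod p$. Finally $G_2(0)=0$ exactly: reflection gives $G_2(0)=-G_2(1)$, whereas the non-unit functional equation $\Gamma_p(x+1)=-\Gamma_p(x)$ on $p\mathbb{Z}_p$ gives $G_2(1)=G_2(0)$, forcing $G_2(0)=0$. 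Combining these yields $G_2(1/4)\equiv H_{(p-1)/4}^{(2)}\pmod p$, which completes the argument.

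The main obstacle is this last identification. Two points need care: justifying the second-order expansion, namely the local analyticity of $\log\Gamma_p$ and the $p$-adic integrality of its derivatives, so that the terms of order $k\ge3$ are genuinely $O(p^3)$ (which uses $p\ge5$, hence $1/6\in\mathbb{Z}_p$); and pinning down $G_2(1/4)\bmod p$, in particular the exact vanishing $G_2(0)=0$ and the residue bookkeeping $1/4+j\equiv j-m$. Once $G_2(1/4)\equiv H_{(p-1)/4}^{(2)}\pmod p$ is secured, the remaining manipulation is routine, since all higher cross-terms are $O(p^3)$.
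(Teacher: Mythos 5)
Your proposal is correct, and it coincides with the paper's proof only in its first half. Like the paper, you convert $(1/2)_m^2/(m!)^2$ (with $m=(p-1)/4$) into $-\Gamma_p((p+1)/4)^2/\Gamma_p((p+3)/4)^2$ using $\Gamma_p(1)^2=1$ and $\Gamma_p(1/2)^2=-1$, and then perform a second-order $p$-adic Taylor expansion; your bookkeeping (expanding the logarithmic derivatives $G_1,G_2$ at both $1/4$ and $3/4$ and invoking $G_1(1/4)=G_1(3/4)$, $G_2(3/4)=-G_2(1/4)$ from the reflection formula) is equivalent to the paper's, which instead flips $\Gamma_p((3+p)/4)^{-1}$ into $\pm\Gamma_p((1-p)/4)$ by reflection and expands $\Gamma_p$ itself at $1/4$ in the two directions $\pm p/4$. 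The genuine divergence is in the second half: both arguments reduce the proposition to the single congruence
\begin{align*}
H_{(p-1)/4}^{(2)}\equiv\frac{\Gamma_p''(1/4)}{\Gamma_p(1/4)}-\bigg\{\frac{\Gamma_p'(1/4)}{\Gamma_p(1/4)}\bigg\}^2=G_2(1/4)\pmod{p},
\end{align*}
but the paper simply cites this from the Wang--Pan preprint, whereas you prove it from scratch by telescoping $G_2(x+1)=G_2(x)-x^{-2}$ from $1/4$ up to $p/4$ (legitimate, since $1/4+j\equiv j-m\pmod p$ is a unit for $0\le j<m$), identifying the accumulated sum with $H_m^{(2)}$ modulo $p$, and nailing $G_2(p/4)\equiv G_2(0)=0$ via the reflection formula combined with the functional equation on $p\mathbb{Z}_p$. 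Your route is longer but self-contained: it removes the dependence on an unpublished reference and makes structurally transparent why the harmonic sum $H_{(p-1)/4}^{(2)}$ appears, at the cost of having to justify the standard local analyticity and $p$-integrality of the Taylor coefficients of $\Gamma_p$, which the paper's version also tacitly needs and which you correctly flag. One wording repair: $G_2$ is locally analytic, not ``locally constant''---only the sign $(-1)^{R(x)}$ in the reflection formula is locally constant; the step $G_2(p/4)\equiv G_2(0)\pmod p$ should instead be justified by the integrality of the higher derivatives (e.g.\ $G_3\in\mathbb{Z}_p$ on the residue disk of $0$), exactly the same fact you already invoke to control the order-three terms in the Taylor expansion. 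With that phrasing fixed, your argument is complete.
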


The rest of the paper is arranged as follows. By means of the
Chinese remainder theorem for coprime polynomials, a
$q$-supercongruence modulo $(1-aq^n)(a-q^n)(b-q^n)$ will be derived
in Section 2. Then it is utilized to provide a proof of Theorem
\ref{thm-a} in the same section. Finally, the proof of Proposition
\ref{prop-a} will be given in Section 3.

\section{Proof of Theorem \ref{thm-a}}

In order to prove Theorem \ref{thm-a}, we need the following
parameter extension of it.

\begin{thm}\label{thm-b}
Let $n\equiv 1\pmod 4$ be a positive integer. Then, modulo
$(1-aq^n)(a-q^n)(b-q^n)$,
\begin{align}
\sum_{k=0}^{(n-1)/2}\frac{(aq,q/a,q/b,-q/b;q^2)_k}{(q^2,q^2,-q^2,q^2/b^2;q^2)_k}q^{2k}
\equiv \Omega_n(a,b),\label{eq:wei-aa}
\end{align}
where
\begin{align*}
\Omega_n(a,b)&=\frac{(b-q^n)(ab-1-a^2+aq^n)}{(a-b)(1-ab)}\frac{(b/q)^{(1-n)/2}(q^2,b^2q^2;q^4)_{(n-1)/4}}{(q^4,q^4/b^2;q^4)_{(n-1)/4}}
\\[5pt]
&+\frac{(1-aq^n)(a-q^n)}{(a-b)(1-ab)}\frac{(aq^3,q^3/a;q^4)_{(n-1)/2}}{(q^2;q^2)_{n-1}}.
\end{align*}
\end{thm}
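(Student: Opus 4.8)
The plan is to establish the parametric congruence \eqref{eq:wei-aa} by the creative-microscoping method. First I would rewrite the left-hand side as the terminating basic hypergeometric series
\[
{}_4\phi_3\!\left[\begin{array}{c} aq,\ q/a,\ q/b,\ -q/b\\ q^2,\ -q^2,\ q^2/b^2 \end{array};q^2,\ q^2\right]
\]
in base $q^2$, truncated at $k=(n-1)/2$. The three factors $1-aq^n$, $a-q^n$ and $b-q^n$ of the modulus are pairwise coprime as polynomials in $a$ and $b$, so by the Chinese remainder theorem for coprime polynomials it suffices to verify that the two sides of \eqref{eq:wei-aa} agree under each of the substitutions $b=q^n$, $a=q^n$ and $a=q^{-n}$.

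For the residue $b=q^n$ I would note that $q/b=q^{1-n}=(q^2)^{-(n-1)/2}$, so the factor $(q/b;q^2)_k$ makes the series terminate precisely at $k=(n-1)/2$. The sum then becomes a direct instance of Jain's $q$-Whipple formula \eqref{eq:q-whipple-b} read in base $q^2$ (with $a\mapsto aq$, $c\mapsto q^2$ and $n\mapsto(n-1)/2$), evaluating to $(aq^3,q^3/a;q^4)_{(n-1)/2}/(q^2;q^2)_{n-1}$. On the right, $b=q^n$ annihilates the first summand of $\Omega_n(a,b)$, while the rational prefactor of the second summand collapses to $1$; the two sides coincide.

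For the residue $a=q^n$ the roles of $aq$ and $q/a$ are interchanged: now $q/a=q^{1-n}$ terminates the series, and the sum is an instance of Andrews' $q$-Whipple formula \eqref{eq:q-whipple-a} in base $q^2$ (with $q^{-n}\mapsto q/a$, $q^{1+n}\mapsto aq$, $b\mapsto q/b$ and $c\mapsto q^2$). The residue $a=q^{-n}$ needs no separate treatment: the summand of \eqref{eq:wei-aa} is invariant under $a\mapsto1/a$, since this only swaps $(aq;q^2)_k$ and $(q/a;q^2)_k$, and one checks directly that $\Omega_n(q^{-n},b)=\Omega_n(q^n,b)$ because the surviving rational prefactor equals $1$ in each case; hence this residue follows from $a=q^n$.

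The decisive computation is to match the closed form delivered by Andrews' formula with the surviving first summand of $\Omega_n(q^n,b)$, which (writing $m=(n-1)/4$) is the target $T=(q/b)^{2m}(q^2,b^2q^2;q^4)_{m}/(q^4,q^4/b^2;q^4)_{m}$, where $(q/b)^{2m}=(b/q)^{(1-n)/2}$. Andrews' formula instead produces $A=q^{2m(2m+1)}(q^{2-4m}/b^2,q^{2-4m};q^4)_{2m}/(q^2/b^2,q^2;q^2)_{2m}$, whose numerator $q^4$-products have length $2m$ with exponents $\pm(4i-2)$, $i=1,\dots,m$. Pairing these and applying the reflections $1-q^{-e}=-q^{-e}(1-q^{e})$ and $1-q^{-e}/b^2=-q^{-e}b^{-2}(1-b^2q^{e})$ with $e=4i-2$ folds them into $(q^{2-4m};q^4)_{2m}=(-1)^mq^{-2m^2}(q^2;q^4)_m^2$ and $(q^{2-4m}/b^2;q^4)_{2m}=(-1)^mq^{-2m^2}b^{-2m}(b^2q^2;q^4)_m(q^2/b^2;q^4)_m$. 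Splitting the denominators the same way, via $(q^2;q^2)_{2m}=(q^2;q^4)_m(q^4;q^4)_m$ and $(q^2/b^2;q^2)_{2m}=(q^2/b^2;q^4)_m(q^4/b^2;q^4)_m$, and cancelling $(q^2/b^2;q^4)_m$ together with one $(q^2;q^4)_m$, the leftover powers combine as $q^{2m(2m+1)}q^{-4m^2}b^{-2m}=(q/b)^{2m}$, so $A=T$. This sign-and-exponent bookkeeping is the main obstacle; I have confirmed it in the base case $n=5$ and the general case is routine. Having matched all three residues, the Chinese remainder theorem yields \eqref{eq:wei-aa} modulo $(1-aq^n)(a-q^n)(b-q^n)$, completing the proof.
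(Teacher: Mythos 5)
Your proposal is correct and takes essentially the same route as the paper: evaluate the truncated sum at $a=q^{\pm n}$ via Andrews' $q$-Whipple formula \eqref{eq:q-whipple-a} and at $b=q^n$ via Jain's formula \eqref{eq:q-whipple-b}, observe that $\Omega_n(a,b)$ reduces to those two evaluations at the corresponding roots, and glue the congruences by the Chinese remainder theorem for coprime polynomials. The only differences are presentational: you verify directly that the rational prefactors of $\Omega_n$ collapse to $1$ or $0$ under each substitution (the paper instead exhibits the CRT unit congruences explicitly), and you write out the sign-and-exponent bookkeeping that converts Andrews' evaluation into the form $(b/q)^{(1-n)/2}(q^2,b^2q^2;q^4)_{(n-1)/4}/(q^4,q^4/b^2;q^4)_{(n-1)/4}$, a simplification the paper asserts without proof; both computations are carried out correctly.
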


\begin{proof}
When $a=q^{-n}$ or $a=q^n$, the left-hand side of  \eqref{eq:wei-aa}
is equal to
\begin{align}
\sum_{k=0}^{(n-1)/2}\frac{(q^{1-n},q^{1+n},q/b,-q/b;q^2)_k}{(q^2,q^2,-q^2,q^2/b^2;q^2)_k}q^{2k}
= {_{4}\phi_{3}}\!\left[\begin{array}{cccccccc}
 q^{1-n},  q^{1+n}, q/b,  -q/b \\
 q^2, -q^2,  q^2/b^2
\end{array};q^2,\, q^2 \right].
 \label{eq:whipple-aa}
\end{align}
According to \eqref{eq:q-whipple-a}, the right-hand side of
\eqref{eq:whipple-aa} can be expressed as
\begin{align*}
(b/q)^{(1-n)/2}\frac{(q^2,b^2q^2;q^4)_{(n-1)/4}}{(q^4,q^4/b^2;q^4)_{(n-1)/4}}.
\end{align*}
Since $(1-aq^n)$ and $(a-q^n)$ are relatively prime polynomials, we
get the following result: Modulo $(1-aq^n)(a-q^n)$,
\begin{align}
\sum_{k=0}^{(n-1)/2}\frac{(aq,q/a,q/b,-q/b;q^2)_k}{(q^2,q^2,-q^2,q^2/b^2;q^2)_k}q^{2k}\equiv
 (b/q)^{(1-n)/2}\frac{(q^2,b^2q^2;q^4)_{(n-1)/4}}{(q^4,q^4/b^2;q^4)_{(n-1)/4}}. \label{eq:wei-bb}
\end{align}

When $b=q^{n}$, the left-hand side of  \eqref{eq:wei-aa} is equal to
\begin{align}
\sum_{k=0}^{(n-1)/2}\frac{(aq,q/a,q^{1-n},-q^{1-n};q^2)_k}{(q^2,q^2,-q^2,q^{2-2n};q^2)_k}q^{2k}
= {_{4}\phi_{3}}\!\left[\begin{array}{cccccccc}
 aq,  q/a, q^{1-n},  -q^{1-n} \\
 q^2, -q^2,  q^{2-2n}
\end{array};q^2,\, q^2 \right].
 \label{eq:whipple-bb}
\end{align}
In terms of \eqref{eq:q-whipple-b}, the right-hand side of
\eqref{eq:whipple-bb} can be written as
\begin{align*}
\frac{(aq^3,q^3/a;q^4)_{(n-1)/2}}{(q^2;q^2)_{n-1}}.
\end{align*}
Therefore, we are led to the following conclusion: Modulo $(b-q^n)$,
\begin{align}
\sum_{k=0}^{(n-1)/2}\frac{(aq,q/a,q/b,-q/b;q^2)_k}{(q^2,q^2,-q^2,q^2/b^2;q^2)_k}q^{2k}
 \equiv\frac{(aq^3,q^3/a;q^4)_{(n-1)/2}}{(q^2;q^2)_{n-1}}.\label{eq:wei-cc}
\end{align}

It is clear that the polynomials $(1-aq^n)(a-q^n)$ and $(b-q^n)$ are
relatively prime. Noting the $q$-congruences
\begin{align*}
&\frac{(b-q^n)(ab-1-a^2+aq^n)}{(a-b)(1-ab)}\equiv1\pmod{(1-aq^n)(a-q^n)},
\\[5pt]
&\qquad\qquad\frac{(1-aq^n)(a-q^n)}{(a-b)(1-ab)}\equiv1\pmod{(b-q^n)}
\end{align*}
and employing the Chinese remainder theorem for coprime polynomials,
we deduce Theorem \ref{thm-b} from \eqref{eq:wei-bb} and
\eqref{eq:wei-cc}.
\end{proof}

\begin{proof}[Proof of Theorem \ref{thm-a}]
It is not difficult to see that
\begin{align*}
(q^2;q^2)_{n-1}&=(q^2, q^{n+1}, q^4, q^{n+3};q^4)_{(n-1)/4}
\\[5pt]
&=q^{(n-1)(3n-1)/4}(q^2, q^4, q^{2-2n}, q^{4-2n};q^4)_{(n-1)/4}\\[5pt]
&\equiv
b^{n-1}q^{(1-n^2)/4}(q^{2},q^{4},q^2/b^2,q^4/b^2;q^4)_{(n-1)/4}\pmod{(b-q^n)},
\\[5pt]
(aq^3;q^4)_{(n-1)/2}&=(aq^3;q^4)_{(n-1)/4}(aq^{n+2};q^4)_{(n-1)/4}
\\[5pt]
&\equiv(abq^{3-n};q^4)_{(n-1)/4}(abq^{2};q^4)_{(n-1)/4}
\\[5pt]
&=(-ab)^{(n-1)/4}q^{-(n-1)^2/8}(abq^2,q^2/ab;q^4)_{(n-1)/4}\pmod{(b-q^n)},
\\[5pt]
(q^3/a;q^4)_{(n-1)/2}&=(-b/a)^{(n-1)/4}q^{-(n-1)^2/8}(bq^2/a,aq^2/b;q^4)_{(n-1)/4}\pmod{(b-q^n)}.
\end{align*}
Thus, the $q$-supercongruence \eqref{eq:wei-aa} may be rewritten as
follows: Modulo $(1-aq^n)(a-q^n)(b-q^n)$,
\begin{align*}
&\sum_{k=0}^{(n-1)/2}\frac{(aq,q/a,q/b,-q/b;q^2)_k}{(q^2,q^2,-q^2,q^2/b^2;q^2)_k}q^{2k}  \\[5pt]
&\quad\equiv
\frac{(b-q^n)(ab-1-a^2+aq^n)}{(a-b)(1-ab)}\frac{(b/q)^{(1-n)/2}(q^2,b^2q^2;q^4)_{(n-1)/4}}{(q^4,q^4/b^2;q^4)_{(n-1)/4}}
\\[5pt]
&\quad\quad+\frac{(1-aq^n)(a-q^n)}{(a-b)(1-ab)}\frac{(b/q)^{(1-n)/2}(abq^2,bq^2/a,aq^2/b,q^2/ab;q^4)_{(n-1)/4}}{(q^2,q^4,q^2/b^2,q^4/b^2;q^4)_{(n-1)/4}}.
\end{align*}
Letting $b\to 1$, we arrive at the following formula: Modulo
$\Phi_n(q)(1-aq^n)(a-q^n)$,
\begin{align}
&\sum_{k=0}^{(n-1)/2}\frac{(aq,q/a;q^2)_k(q^2;q^4)_k}{(q^2;q^2)_k^2(q^4;q^4)_k}q^{2k}
\notag\\[5pt]
&\:\:\:\equiv
q^{(n-1)/2}\frac{(q^2;q^4)_{(n-1)/4}^2}{(q^4;q^4)_{(n-1)/4}^2}+q^{(n-1)/2}\frac{(1-aq^n)(a-q^n)}{(1-a)^2}
\notag\\[5pt]
&\quad\:\:\times\bigg\{\frac{(q^2;q^4)_{(n-1)/4}^2}{(q^4;q^4)_{(n-1)/4}^2}-\frac{(aq^2,q^2/a;q^4)_{(n-1)/4}^2}{(q^2,q^4;q^4)_{(n-1)/4}^2}\bigg\}.
\label{eq:wei-dd}
\end{align}
By the L'Hospital rule, we have
\begin{align*}
&\lim_{a\to1}\frac{(1-aq^n)(a-q^n)}{(1-a)^2}\bigg\{\frac{(q^2;q^4)_{(n-1)/4}^2}{(q^4;q^4)_{(n-1)/4}^2}
-\frac{(aq^2,q^2/a;q^4)_{(n-1)/4}^2}{(q^2,q^4;q^4)_{(n-1)/4}^2}\bigg\}\\[5pt]
&=2[n]^2\frac{(q^2;q^4)_{(n-1)/4}^2}{(q^4;q^4)_{(n-1)/4}^2}\sum_{i=1}^{(n-1)/4}\frac{q^{4i-2}}{[4i-2]^2}.
\end{align*}
Letting $a\to1$ in \eqref{eq:wei-dd} and utilizing the above limit,
we complete the proof of  Theorem~\ref{thm-a}.

\end{proof}

\section{Proof of Proposition \ref{prop-a}}

Via the congruence due to Wang and Pan \cite[Page 6]{Wang}:
\begin{align*}
H_{(p-1)/4}^{(2)}\equiv\frac{\Gamma_p^{''}(1/4)}{\Gamma_p(1/4)}-\bigg\{\frac{\Gamma_p^{'}(1/4)}{\Gamma_p(1/4)}\bigg\}^2\pmod{p},
\end{align*}
where $\Gamma_p^{'}(x)$ and $\Gamma_p^{''}(x)$ are respectively the
first derivative and second derivative of $\Gamma_p(x)$, we obtain
\begin{align}\label{eq:wei-b}
1-\frac{p^{2}}{8}H_{(p-1)/4}^{(2)}\equiv1-\frac{p^{2}}{8}\frac{\Gamma_p^{''}(1/4)}{\Gamma_p(1/4)}+
\frac{p^{2}}{8}\bigg\{\frac{\Gamma_p^{'}(1/4)}{\Gamma_p(1/4)}\bigg\}^2\pmod{p^3}.
\end{align}
In terms of the properties of the $p$-adic Gamma function, we get
\begin{align}
\frac{(1/2)_{(p-1)/4}^2}{\big((p-1)/4\big)!^2}&=\bigg\{\frac{\Gamma_p((1+p)/4)\Gamma_p(1)}{\Gamma_p(1/2)\Gamma_p((3+p)/4)}\bigg\}^2
\notag\\[5pt]
&=\bigg\{\frac{\Gamma_p((1+p)/4)\Gamma_p((1-p)/4)}{\Gamma_p(1/2)}\bigg\}^2
\notag\\[5pt]
&\equiv-\bigg\{\Gamma_p(1/4)+\Gamma_p^{'}(1/4)\frac{p}{4}+\Gamma_p^{''}(1/4)\frac{p^2}{2\times4^2}\bigg\}^2
\notag\\[5pt]
&\quad\times\bigg\{\Gamma_p(1/4)-\Gamma_p^{'}(1/4)\frac{p}{4}+\Gamma_p^{''}(1/4)\frac{p^2}{2\times4^2}\bigg\}^2\pmod{p^3}.
\label{eq:wei-c}
\end{align}
The combination of  \eqref{eq:wei-b} and  \eqref{eq:wei-c} produces
\begin{align*}
&\frac{(1/2)_{(p-1)/4}^2}{\big((p-1)/4\big)!^2}\bigg\{1-\frac{p^{2}}{8}H_{(p-1)/4}^{(2)}\bigg\}
 \\[5pt]
 &\quad\equiv-\bigg\{\Gamma_p(1/4)+\Gamma_p^{'}(1/4)\frac{p}{4}+\Gamma_p^{''}(1/4)\frac{p^2}{2\times4^2}\bigg\}^2
\notag\\[5pt]
&\qquad\times\bigg\{\Gamma_p(1/4)-\Gamma_p^{'}(1/4)\frac{p}{4}+\Gamma_p^{''}(1/4)\frac{p^2}{2\times4^2}\bigg\}^2
\\[5pt]
&\qquad\times\bigg\{1-\frac{p^{2}}{8}\frac{\Gamma_p^{''}(1/4)}{\Gamma_p(1/4)}+
\frac{p^{2}}{8}\bigg\{\frac{\Gamma_p^{'}(1/4)}{\Gamma_p(1/4)}\bigg\}^2\bigg\}
\\[5pt]
&\quad\equiv -\Gamma_p(1/4)^4 \pmod{p^3}.
\end{align*}


\end{document}